\definecolor{webgreen}{rgb}{0,.5,0}
\def\C{{\mathds{C}}}
\def\N{{\mathds{N}}}
\def\Z{{\mathds{Z}}}
\def\1{{\bf 1}}
\def\lcm{\operatorname{lcm}}
\newtheorem{theorem}{Theorem}[section]
\newtheorem{lemma}[theorem]{Lemma}
\newtheorem{cor}[theorem]{Corollary}
\begin{document}

\title{{\bf Counting subrings of the ring $\Z_m \times \Z_n$}}
\author{L\'aszl\'o T\'oth \\ \\ Department of Mathematics, University of P\'ecs \\
Ifj\'us\'ag \'utja 6, 7624 P\'ecs, Hungary \\ E-mail: {\tt ltoth@gamma.ttk.pte.hu}}
\date{}
\maketitle

\centerline{J. Korean Math. Soc. {\bf 56} (2019), No. 6, pp. 1599--1611}

\begin{abstract} Let $m,n\in \N$. We represent the additive subgroups of the ring $\Z_m \times \Z_n$, which are also
(unital) subrings, and deduce explicit formulas for $N^{(s)}(m,n)$ and $N^{(us)}(m,n)$, denoting the number of subrings of the ring
$\Z_m \times \Z_n$ and its unital subrings, respectively. We show that the functions $(m,n)\mapsto N^{(s)}(m,n)$ and
$(m,n)\mapsto N^{(us)}(m,n)$ are  multiplicative, viewed as functions of two variables, and their Dirichlet series can
be expressed in terms of the Riemann zeta function. We also establish an asymptotic formula for the sum $\sum_{m,n\le x} N^{(s)}(m,n)$,
the error term of which is closely related to the Dirichlet divisor problem.
\end{abstract}

{\sl 2010 Mathematics Subject Classification}: Primary 11N45, 20K27; Secondary 11A25, 13A99.

{\sl Key Words and Phrases}: subgroup; subring; ideal; number of subrings; multiplicative arithmetic function of two variables,
asymptotic formula, Dirichlet divisor problem.

\section{Motivation and preliminaries} \label{Section_Motivation}

Throughout the paper we use the following notation: $\N:=\{1,2,\ldots\}$, $\N_0:=\{0,1,2,\ldots\}$;
the prime power factorization of $n\in \N$ is $n=\prod_p p^{\nu_p(n)}$, the product being over the primes $p$,
where all but a finite number of the exponents $\nu_p(n)$ are zero; $\gcd(m,n)$ and $\lcm(m,n)$ denote the
greatest common divisor and the least common multiple of $m,n\in \N$, respectively;
$\Z_n$ denotes the set of residue classes modulo $n$ ($n\in \N$); $\tau(n)$ is the number of divisors of $n$;
$\varphi$ is Euler's arithmetic function.

Consider the ring $(\Z_m\times \Z_n,+,\cdot)$, where $m,n\in \N$. If $\gcd(m,n)=1$, then it is isomorphic to the ring
$(\Z_{mn},+,\cdot)$. Hence, all of its additive subgroups are subrings, i.e., are closed under multiplication. In fact, all
additive subgroups are ideals of the given ring. If $\gcd(m,n)>1$, then this is not the case.
For example, $K:=\{(2i, i+3j): 0\le i,j\le 5 \}$ is an additive subgroup of
$\Z_{12}\times \Z_{18}$, $(2,7)\in K$, $(4,5) \in K$, but $(2,7)(4,5)=(8,17)\notin K$. At the same time, the subgroup
$L:=\{(2i,2i+3j): 0\le i,j\le 5 \}$ is a subring of $\Z_{12}\times \Z_{18}$, as a direct check shows. Here $L$ is not an ideal,
since, e.g., $(2,5)\in L$, but $(2,5)(1,3)=(2,15)\notin L$.

Therefore, the following natural questions arise: Let $m,n\in \N$. What are the subrings of the
ring $\Z_m \times \Z_n$? What are its unital subrings, i.e., subrings including the multiplicative
unity $(1,1)$? What are its ideals? What are the number of subrings, unital
subrings, respectively ideals of $\Z_m \times \Z_n$?

Subrings and ideals of direct products of rings were investigated by Anderson and Camillo \cite{AndCam2009},
Anderson and Kintzinger \cite{AK2008}, Chajda, Eigenthaler and L\"{a}nger \cite{CEL2018}. Versions of Goursat's
lemma for ideals and subrings of a direct product of rings were given in \cite[Th.\ 11]{AndCam2009}.
The ideals of the ring $\Z_m \times \Z_n$ were also discussed in a recent paper by
Chebolu and Henry \cite{CheHen2016}. In fact, the ideals of $\Z_m \times \Z_n$ are exactly of the form $I\times J$, where $I$ and $J$ are
additive subgroups of $\Z_m$ and $\Z_n$, respectively. This follows from a well known property concerning the ideals of the direct product
of two arbitrary rings with unity, and has a simple proof. See \cite[Prop.\ 9]{AndCam2009}.
Hence, the number of ideals of $\Z_m \times \Z_n$ is $\tau(m)\tau(n)$.

However, there are no direct results in the above papers concerning the subrings and unital subrings of
$\Z_m \times \Z_n$, and we are not aware of related results in the literature. We remark that the ideals and subrings of the
ring $m\Z\times  n\Z$ were discussed in \cite[Ex.\ 12]{AndCam2009}. Subrings and unital subrings
(called sublattices and subrings, respectively) of a fixed index of the ring $\Z^n$ were studied by Liu \cite{Liu2007},
and in recent preprints by Atanasov, Kaplan, Krakoff and Menzel \cite{AKKM2017}, Chimni and Takloo-Bighash \cite{ChiTak2018}.

The following results on the representation and the number of subgroups of the group
$(\Z_m\times \Z_n,+)$ with $m,n\in \N$ were deduced by the author \cite{Tot2014Tatra}, using
Goursat's lemma for groups. See also the papers \cite{HHTW2014,Tar2010}, using different approaches.

\begin{theorem}[{\rm \cite[Th.\ 3.1]{Tot2014Tatra}}] \label{Th_repres} Let $m,n\in \N$.
For every $m,n\in \N$ let
\begin{equation*} \label{def_J}
J_{m,n}:=\left\{(a,b,c,d,\ell)\in \N^5: a\mid m, b\mid a, c\mid n,
d\mid c, \frac{a}{b}=\frac{c}{d}, \ell \le \frac{a}{b}, \,
\gcd\left(\ell,\frac{a}{b} \right)=1\right\}.
\end{equation*}

For $(a,b,c,d,\ell)\in J_{m,n}$ define
\begin{equation} \label{def_K}
K_{a,b,c,d,\ell}:= \left\{\left(i\frac{m}{a}, i\ell
\frac{n}{c}+j\frac{n}{d}\right): 0\le i\le a-1, 0\le j\le
d-1\right\}.
\end{equation}

Then the following hold true:

i) The map $(a,b,c,d,\ell)\mapsto K_{a,b,c,d,\ell}$ is a bijection
between the set $J_{m,n}$ and the set of subgroups of $(\Z_m \times
\Z_n,+)$.

ii) The invariant factor decomposition of the subgroup $K_{a,b,c,d,\ell}$ is
\begin{equation*} \label{H_isom}
K_{a,b,c,d,\ell} \simeq \Z_{\gcd(b,d)} \times \Z_{\lcm(a,c)}.
\end{equation*}

iii) The order of the subgroup $K_{a,b,c,d,\ell}$ is $ad$ and its
exponent is $\lcm(a,c)$.

iv) The subgroup $K_{a,b,c,d,\ell}$ is cyclic if and only if
$\gcd(b,d)=1$.
\end{theorem}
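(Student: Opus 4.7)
The plan is to apply Goursat's lemma, which puts the subgroups of $A\times B$ in bijection with quintuples $(A_1,A_2,B_1,B_2,\varphi)$ where $A_2\le A_1\le A$, $B_2\le B_1\le B$, and $\varphi:A_1/A_2\to B_1/B_2$ is an isomorphism, the associated subgroup being $\{(x,y)\in A_1\times B_1:\varphi(x+A_2)=y+B_2\}$. First I would recall that the subgroups of $\Z_m$ are exactly $\langle m/a\rangle$ for $a\mid m$, cyclic of order $a$, and similarly for $\Z_n$. Thus $(A_1,A_2)$ is encoded by a chain $b\mid a\mid m$ via $A_1=\langle m/a\rangle$, $A_2=\langle m/b\rangle$ (so $|A_1/A_2|=a/b$), and $(B_1,B_2)$ by $d\mid c\mid n$ in the same way. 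An isomorphism $A_1/A_2\simeq\Z_{a/b}\to\Z_{c/d}\simeq B_1/B_2$ exists iff $a/b=c/d$, and is then determined by the image of a fixed generator, giving a parameter $\ell$ with $1\le\ell\le a/b$ and $\gcd(\ell,a/b)=1$. These conditions together are precisely the definition of $J_{m,n}$, establishing the bijection in (i).

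To derive the explicit formula (\ref{def_K}), I would parameterize $A_1=\{i\cdot m/a:0\le i\le a-1\}$ and $B_1=\{k\cdot n/c:0\le k\le c-1\}$, and translate the condition $\varphi(x+A_2)=y+B_2$ into $k\equiv i\ell\pmod{c/d}$, i.e., $k=i\ell+j\cdot c/d$ with $0\le j\le d-1$. Substituting these into $(x,y)=(i\cdot m/a,\,k\cdot n/c)$ produces the claimed form of $K_{a,b,c,d,\ell}$.

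For (iii), the order $|K|=ad$ follows at once from this parameterization, distinctness of the pairs $(i,j)$ being immediate since the first coordinate determines $i$ and the second then determines $j$. For the exponent, any $(x,y)\in K$ has order equal to the least common multiple of the orders of $x\in A_1$ and $y\in B_1$, hence divides $\lcm(a,c)$; conversely, the two coordinate projections $K\twoheadrightarrow A_1\simeq\Z_a$ and $K\twoheadrightarrow B_1\simeq\Z_c$ are surjective, so $\Z_a$ and $\Z_c$ both occur as quotients of $K$, forcing the exponent to be a multiple of $\lcm(a,c)$.

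Since $K$ is a subgroup of the abelian group $\Z_m\times\Z_n$, which has at most two invariant factors, the same is true of $K$; hence $K\simeq\Z_s\times\Z_t$ with $s\mid t$. The exponent computation yields $t=\lcm(a,c)$, and combining $st=ad$ with the identity $\lcm(a,c)=(a/b)\,\lcm(b,d)$ (which follows from $a/b=c/d$) gives $s=ad/\lcm(a,c)=bd/\lcm(b,d)=\gcd(b,d)$. This establishes (ii), and (iv) is its immediate consequence: $K$ is cyclic iff $s=1$, i.e., $\gcd(b,d)=1$. The main obstacle will be the bookkeeping required to verify that $\ell$ (modulo $a/b$) enumerates each isomorphism between the cyclic quotients exactly once, so that the map $J_{m,n}\to\{\text{subgroups}\}$ is genuinely a bijection.
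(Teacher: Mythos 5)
Your proof is correct and follows the same route as the cited source \cite{Tot2014Tatra}: the paper states explicitly that this representation was deduced via Goursat's lemma, with the chains $b\mid a\mid m$, $d\mid c\mid n$ encoding the subgroup pairs and $\ell$ encoding the isomorphism of cyclic quotients, exactly as you describe. The remaining bookkeeping you flag (that $\ell$ with $\gcd(\ell,a/b)=1$, $1\le\ell\le a/b$, enumerates the $\varphi(a/b)$ isomorphisms exactly once) is routine, and your derivation of (ii)--(iv) from the order, the exponent, and the identity $\lcm(a,c)=ad/\gcd(b,d)$ matches the remark following the theorem.
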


We note that by the condition $a/b=c/d$ we have $\lcm(a,c)=\lcm(a,ad/b)=\lcm(ad/d,ad/b)=ad/\gcd(b,d)$.
That is, $\gcd(b,d)\cdot \lcm(a,c)=ad$. Also, $\gcd(b,d)\mid \lcm(a,c)$.

Figure 1 represents the subgroup $K_{6,2,18,6,2}$ of $\Z_{12}\times \Z_{18}$. It has order $36$ and is
isomorphic to $\Z_2\times \Z_{18}$. Here $K_{6,2,18,6,2}= \{(2i,2i+3j): 0\le i,j\le 5 \}=L$, quoted above, and it is
also a subring.

\begin{theorem}[{\rm \cite[Th.\ 4.1]{Tot2014Tatra}}] \label{Th_number} Let $m,n\in \N$. The number $s(m,n)$ of
subgroups of the group $(\Z_m \times \Z_n,+)$ is given by
\begin{equation*} \label{total_number_subgroups}
s(m,n)= \sum_{\substack{i\mid m\\ j\mid n}} \gcd(i,j)
= \sum_{t \mid \gcd(m,n)} \varphi(t) \tau \left(\frac{m}{t} \right)
\tau \left(\frac{n}{t} \right).
\end{equation*}
\end{theorem}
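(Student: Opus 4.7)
The plan is to enumerate the set $J_{m,n}$ directly, using the bijection $(a,b,c,d,\ell)\mapsto K_{a,b,c,d,\ell}$ supplied by Theorem~\ref{Th_repres}(i). The essential observation is that the conditions $b\mid a$, $d\mid c$, $a/b=c/d$ can be encoded by a single parameter, the common ratio
\[
u := \frac{a}{b} = \frac{c}{d},
\]
with $u\mid a$, $u\mid c$, $b=a/u$, and $d=c/u$. So a quintuple in $J_{m,n}$ is equivalent to the data of a positive integer $u$, a multiple $a$ of $u$ dividing $m$, a multiple $c$ of $u$ dividing $n$, and an integer $\ell$ with $1\le \ell\le u$ and $\gcd(\ell,u)=1$.

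Fix $u\in \N$. A necessary condition for any admissible $a$ (resp.\ $c$) to exist is $u\mid m$ (resp.\ $u\mid n$), i.e., $u\mid \gcd(m,n)$. For such $u$, the number of $a$ with $u\mid a\mid m$ equals $\tau(m/u)$ (parametrize by $a/u$ dividing $m/u$), and similarly the number of $c$ is $\tau(n/u)$. The number of admissible $\ell$ is $\varphi(u)$. Multiplying and summing over $u$ yields
\[
s(m,n)=\sum_{u\mid \gcd(m,n)} \varphi(u)\,\tau(m/u)\,\tau(n/u),
\]
which is the second displayed form.

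To pass to the first form I would apply the classical identity $k=\sum_{u\mid k}\varphi(u)$ with $k=\gcd(i,j)$ and interchange the order of summation:
\[
\sum_{\substack{i\mid m\\ j\mid n}}\gcd(i,j)
=\sum_{\substack{i\mid m\\ j\mid n}}\sum_{u\mid \gcd(i,j)}\varphi(u)
=\sum_{u}\varphi(u)\Biggl(\sum_{\substack{i\mid m\\ u\mid i}}1\Biggr)\Biggl(\sum_{\substack{j\mid n\\ u\mid j}}1\Biggr),
\]
where the outer sum is effectively restricted to $u\mid \gcd(m,n)$, and the two inner sums evaluate to $\tau(m/u)$ and $\tau(n/u)$ respectively. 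This matches the expression just obtained, so the two forms agree.

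There is no substantial obstacle here once Theorem~\ref{Th_repres} is available; the entire argument is organizational. The only point requiring mild care is the bookkeeping of divisibility constraints when introducing $u$, in order to justify that the summation index runs exactly over divisors of $\gcd(m,n)$ and that $a,c$ are each counted by $\tau(m/u),\tau(n/u)$.
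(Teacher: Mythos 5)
Your proposal is correct: the enumeration of $J_{m,n}$ via the common ratio $u=a/b=c/d$, giving $\tau(m/u)$ choices for $a$, $\tau(n/u)$ for $c$, and $\varphi(u)$ for $\ell$, together with the identity $\gcd(i,j)=\sum_{u\mid\gcd(i,j)}\varphi(u)$ to reconcile the two displayed forms, is exactly the natural derivation from Theorem~\ref{Th_repres}. The present paper quotes this result from \cite{Tot2014Tatra} without reproducing a proof, and your argument is essentially the one given there.
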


\medskip \medskip
\centerline{Figure 1. The subgroup $K_{6,2,18,6,2}$ of $\Z_{12}\times \Z_{18}$}
\medskip
\centerline{$
\begin{array}{ccccccccccccc}
17 & \cdot & \cdot & \blacksquare & \cdot & \cdot & \cdot & \cdot & \cdot & \blacksquare & \cdot & \cdot & \cdot \\
16 & \cdot & \cdot & \cdot & \cdot & \blacksquare & \cdot & \cdot & \cdot & \cdot & \cdot & \blacksquare & \cdot \\
15 & \blacksquare & \cdot & \cdot & \cdot & \cdot & \cdot & \blacksquare & \cdot & \cdot & \cdot & \cdot & \cdot \\
14 & \cdot & \cdot & \blacksquare & \cdot & \cdot & \cdot & \cdot & \cdot & \blacksquare & \cdot & \cdot & \cdot \\
13 & \cdot & \cdot & \cdot & \cdot & \blacksquare & \cdot & \cdot & \cdot & \cdot & \cdot & \blacksquare & \cdot \\
12 & \blacksquare & \cdot & \cdot & \cdot & \cdot & \cdot & \blacksquare & \cdot & \cdot & \cdot & \cdot & \cdot \\
11 & \cdot & \cdot & \blacksquare & \cdot & \cdot & \cdot & \cdot & \cdot & \blacksquare & \cdot & \cdot & \cdot \\
10 & \cdot & \cdot & \cdot & \cdot & \blacksquare & \cdot & \cdot & \cdot & \cdot & \cdot & \blacksquare & \cdot \\
9 & \blacksquare & \cdot & \cdot & \cdot & \cdot & \cdot & \blacksquare & \cdot & \cdot & \cdot & \cdot & \cdot \\
8 & \cdot & \cdot & \blacksquare & \cdot & \cdot & \cdot & \cdot & \cdot & \blacksquare & \cdot & \cdot & \cdot \\
7 & \cdot & \cdot & \cdot & \cdot & \blacksquare & \cdot & \cdot & \cdot & \cdot & \cdot & \blacksquare & \cdot \\
6 & \blacksquare & \cdot & \cdot & \cdot & \cdot & \cdot & \blacksquare & \cdot & \cdot & \cdot & \cdot & \cdot \\
5 & \cdot & \cdot & \blacksquare & \cdot & \cdot & \cdot & \cdot & \cdot & \blacksquare & \cdot & \cdot & \cdot \\
4 & \cdot & \cdot & \cdot & \cdot & \blacksquare & \cdot & \cdot & \cdot & \cdot & \cdot & \blacksquare & \cdot \\
3 & \blacksquare & \cdot & \cdot & \cdot & \cdot & \cdot & \blacksquare & \cdot & \cdot & \cdot & \cdot & \cdot \\
2 & \cdot & \cdot & \blacksquare & \cdot & \cdot & \cdot & \cdot & \cdot & \blacksquare & \cdot & \cdot & \cdot \\
1 & \cdot & \cdot & \cdot & \cdot & \blacksquare & \cdot & \cdot & \cdot & \cdot & \cdot & \blacksquare & \cdot \\
0 & \blacksquare & \cdot & \cdot & \cdot & \cdot & \cdot & \blacksquare & \cdot & \cdot & \cdot & \cdot & \cdot \\
& 0\text{ } & 1\text{ } & 2\text{ } & 3\text{ } & 4\text{ } & 5\text{ } & 6
\text{ } & 7\text{ } & 8\text{ } & 9\text{ } & 10 & 11
\end{array}
$}
\medskip \medskip

We recall that a nonzero arithmetic function of
two variables $(m,n)\mapsto f(m,n)$ is said to be  multiplicative if
$f(m_1n_1,m_2n_2)=f(m_1,m_2)f(n_1,n_2)$, provided that $\gcd(m_1m_2,n_1n_2)=1$. We refer to our survey paper \cite{Tot2014Survey} regarding
this concept. If $f$ is multiplicative, then it is determined
by the values $f(p^\alpha,p^\beta)$, where $p$ is prime and $\alpha, \beta \in \N_0$. More exactly, $f(1,1)=1$ and
for any $m,n \in \N$,
\begin{equation*}
f(m,n)= \prod_p f(p^{\nu_p(m)},p^{\nu_p(n)}).
\end{equation*}

The function $(m,n)\mapsto s(m,n)$ is multiplicative and for any prime powers $p^\alpha, p^\beta$ with $1\le \alpha \le \beta$,
its values are given by
\begin{equation} \label{s_prime_pow}
s(p^\alpha,p^\beta)= \frac{(\beta-\alpha+1)p^{\alpha+2}-(\beta-\alpha-1)p^{\alpha+1}-(\alpha+\beta+3)p+(\alpha+\beta+1)}{(p-1)^2}.
\end{equation}

In this paper we characterize the subgroups $K_{a,b,c,d,\ell}$ of $\Z_m \times \Z_n$, given by \eqref{def_K}, which are also
(unital) subrings, and deduce explicit formulas for $N^{(s)}(m,n)$ and $N^{(us)}(m,n)$, denoting the number of subrings and unital subrings,
respectively. We show that the functions $(m,n)\mapsto N^{(s)}(m,n)$ and $(m,n)\mapsto N^{(us)}(m,n)$ are also multiplicative,
and their Dirichlet series can be expressed in terms of the Riemann zeta function.
We establish an asymptotic formula for the sum $\sum_{m,n\le x} N^{(s)}(m,n)$, the error term of which is closely related to the
Dirichlet divisor problem. Our results are included in  Section \ref{Section_Results} and their proofs are given in Section \ref{Section_Proofs}.

\section{Main results} \label{Section_Results}

\begin{theorem} \label{Th_1} Let $m,n\in \N$ and consider the additive subgroup $K_{a,b,c,d,\ell}$ of $\Z_m \times \Z_n$,
given by \eqref{def_K}.

i) The subgroup $K_{a,b,c,d,\ell}$ is a subring of $\Z_m \times \Z_n$ if and only if
\begin{equation} \label{cond_subring}
\frac{c}{d} \mid \ell\frac{n}{c} -\frac{m}{a}.
\end{equation}

ii) The subgroup $K_{a,b,c,d,\ell}$ is a unital subring of $\Z_m \times \Z_n$ if and only if $a=m$, $c=n$, $\ell=1$.
Here $K_{m,b,n,d,1}= \{\left(i,i+j\frac{n}{d}\right): 0\le i\le m-1, 0\le j\le d-1\}$, where $b\mid m$, $d\mid n$,
$\frac{m}{b}=\frac{n}{d}$.

iii) The subgroup $K_{a,b,c,d,\ell}$ is an ideal of $\Z_m \times \Z_n$ if and only if $a=b$, $c=d$, $\ell=1$. In this case
$K_{a,a,c,c,1}=\{i\frac{m}{a}: 0\le i\le a-1\} \times \{j\frac{n}{c}: 0\le j\le c-1\}$, where $a\mid m$, $c\mid n$.
\end{theorem}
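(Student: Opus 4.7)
The plan is to handle the three parts in sequence. Part (i) reduces to checking closure on the two additive generators of $K := K_{a,b,c,d,\ell}$; part (ii) falls out by directly solving $(1,1) \in K$; and part (iii) follows from the structural fact, recalled in the introduction, that every ideal of $\Z_m \times \Z_n$ has the form $I \times J$ with $I \le \Z_m$, $J \le \Z_n$.

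For (i), observe that as an additive group $K$ is generated by $g_1 := (m/a,\ell n/c)$ and $g_2 := (0,n/d)$, so by bilinearity $K$ is closed under multiplication if and only if each of $g_1^2$, $g_1 g_2$, $g_2^2$ lies in $K$. A direct check shows $g_1 g_2 = (0,\ell n^2/(cd))$ and $g_2^2 = (0,n^2/d^2)$ are automatically in $K$: their first coordinates vanish and their second coordinates are integer multiples of $n/d$ modulo $n$. Only $g_1^2 = (m^2/a^2,\ell^2 n^2/c^2)$ gives a nontrivial constraint. Writing $m/a \equiv r \pmod{a}$ with $0 \le r < a$ puts the first coordinate into canonical form $r(m/a)$; membership of $g_1^2$ in $K$ then reduces to demanding that $(d/c)(\ell^2 n/c - r\ell)$ be an integer. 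Using $d/c = b/a$ and the coprimality $\gcd(\ell, a/b) = 1$, this simplifies to $(a/b) \mid \ell n/c - r$; since $r \equiv m/a \pmod{a/b}$ and $a/b = c/d$, this is precisely condition~\eqref{cond_subring}. The main obstacle is executing this fractional arithmetic cleanly and invoking $\gcd(\ell, a/b) = 1$ at the right moment.

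For (ii), the condition $(1,1) \in K$ requires integers $i, j$ with $i(m/a) \equiv 1 \pmod{m}$ and $i\ell n/c + j n/d \equiv 1 \pmod{n}$. The first congruence forces $m/a$, as a divisor of $m$, to divide $1$, so $a = m$ and $i = 1$. The second then demands $(n/d) \mid 1 - \ell n/c$; since $n/c \mid n/d$ (from $d \mid c$), the divisibility propagates to $n/c \mid 1$, forcing $c = n$. The residual congruence $\ell \equiv 1 \pmod{n/d}$, combined with $1 \le \ell \le a/b = n/d$ and $\gcd(\ell, n/d) = 1$, pins down $\ell = 1$. Conversely, when $a = m$, $c = n$, $\ell = 1$, condition~\eqref{cond_subring} holds trivially (both sides vanish) and $(1,1) \in K_{m,b,n,d,1}$ is immediate.

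For (iii), since every ideal of $\Z_m \times \Z_n$ has the form $I \times J$, $K$ is an ideal iff $K = \pi_1(K) \times \pi_2(K)$. A short computation gives $\pi_1(K) = (m/a)\Z_m$ of order $a$ and $\pi_2(K) = \langle \ell n/c, n/d\rangle = (n/c)\Z_n$ of order $c$, where the latter identification uses $n/c \mid n/d$ together with $\gcd(\ell, c/d) = 1$. Comparing $|K| = ad$ with $|\pi_1(K) \times \pi_2(K)| = ac$ forces $c = d$; hence $a/b = c/d = 1$, so $a = b$ and $\ell = 1$, and then $K$ is manifestly the product stated.
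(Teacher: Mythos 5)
Your proposal is correct and reaches all three conclusions by sound arguments; the differences from the paper's proof are organizational rather than substantive, but they are worth noting. In part (i) the paper multiplies two general elements $\bigl(i_1\frac{m}{a}, i_1\ell\frac{n}{c}+j_1\frac{n}{d}\bigr)$ and $\bigl(i_2\frac{m}{a}, i_2\ell\frac{n}{c}+j_2\frac{n}{d}\bigr)$, arrives at a linear congruence in $j$ modulo $c$, and observes that the solvability condition $\frac{c}{d}\mid i_1i_2(\ell\frac{n}{c}-\frac{m}{a})$ is necessary for $i_1=i_2=1$ and sufficient for all $i_1,i_2$; your reduction to the three generator products $g_1^2$, $g_1g_2$, $g_2^2$ via bilinearity is a cleaner packaging of the same computation (the decisive step---solvability of a linear congruence with modulus $c$ and coefficient $c/d$, combined with $\gcd(\ell,c/d)=1$---is identical), and it dispenses with the ``necessary here, sufficient everywhere'' remark. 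Part (ii) is essentially the paper's argument with the elementary details of why $a=m$, $c=n$, $\ell=1$ spelled out more fully. In part (iii) both proofs rest on the structural fact that ideals of $\Z_m\times\Z_n$ are subproducts $I\times J$; the paper then extracts $c=d$ and $\ell=1$ by solving $\ell\frac{n}{c}=k\frac{n}{d}$ directly, whereas you compare $|K|=ad$ (quoting part iii) of the representation theorem) with $|\pi_1(K)\times\pi_2(K)|=ac$ to force $c=d$. Your counting variant is slightly slicker but imports the order formula for $K_{a,b,c,d,\ell}$; the paper's is self-contained. Both are valid, and your computation $\pi_2(K)=(n/c)\Z_n$ correctly uses $\gcd(\ell,c/d)=1$.
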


\begin{theorem} \label{Th_2} Let $m,n\in \N$.

i) The number of subrings of the ring $\Z_m \times \Z_n$ is given by
\begin{equation} \label{def_N}
N^{(s)}(m,n)= \sum_{\substack{i\mid m\\ j\mid n}} h(i,j),
\end{equation}
where
\begin{equation} \label{def_h}
h(i,j)=\sum_{\substack{d\mid \gcd(i,j)\\ \gcd(d,i/d)=\gcd(d,j/d)=t}} \frac{\varphi(d)}{\varphi(d/t)}.
\end{equation}

ii) The number of unital subrings of the ring $\Z_m \times \Z_n$ is $N^{(us)}(m,n) = \tau(\gcd(m,n))$.

iii) The number of ideals of the ring $\Z_m \times \Z_n$ is  $\tau(m)\tau(n)$.
\end{theorem}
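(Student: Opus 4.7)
The plan is to dispose of parts (ii) and (iii) by direct bijections from Theorem \ref{Th_1}, and to handle part (i) by a multiplicativity reduction to prime powers. For (iii), Theorem \ref{Th_1}(iii) puts the ideals in bijection with pairs $(a,c)$, $a\mid m$, $c\mid n$, yielding $\tau(m)\tau(n)$. For (ii), the unital subrings $K_{m,b,n,d,1}$ of Theorem \ref{Th_1}(ii) are parameterized by $(b,d)$ with $b\mid m$, $d\mid n$ and $m/b=n/d$; setting $t:=m/b=n/d$, this becomes a bijection with the divisors $t$ of $\gcd(m,n)$, so $N^{(us)}(m,n)=\tau(\gcd(m,n))$.

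For part (i), I would first use Theorem \ref{Th_1}(i) to write, with $k:=a/b=c/d$,
\[
N^{(s)}(m,n)=\sum_{\substack{a\mid m,\ c\mid n\\ k\mid\gcd(a,c)}} R\!\left(k,\tfrac{m}{a},\tfrac{n}{c}\right),
\]
where $R(k,\alpha,\gamma)$ denotes the number of $\ell\in\{1,\ldots,k\}$ with $\gcd(\ell,k)=1$ and $\ell\gamma\equiv\alpha\pmod{k}$. The strategy is then to reduce to prime powers via multiplicativity. I would first show that $(m,n)\mapsto N^{(s)}(m,n)$ is multiplicative as a two-variable function. If $\gcd(m_1m_2,n_1n_2)=1$, the Chinese remainder theorem yields a ring isomorphism
\[
\Z_{m_1n_1}\times \Z_{m_2n_2}\simeq (\Z_{m_1}\times \Z_{m_2})\times (\Z_{n_1}\times \Z_{n_2}),
\]
whose two factors have coprime orders $m_1m_2$ and $n_1n_2$. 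Any additive subgroup of a direct product of two finite rings of coprime order splits as a direct product of additive subgroups of the factors (the underlying abelian groups do so), and such a product is a subring if and only if each factor is. Hence $N^{(s)}(m_1n_1,m_2n_2)=N^{(s)}(m_1,m_2)\,N^{(s)}(n_1,n_2)$. A brief check using the multiplicativity of $\gcd$ and $\varphi$ shows that $h(i,j)$ is likewise multiplicative in $(i,j)$, which in turn makes $(m,n)\mapsto \sum_{i\mid m,\,j\mid n}h(i,j)$ multiplicative.

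It then suffices to verify the identity at $(m,n)=(p^\mu,p^\nu)$ for each prime $p$. Writing $a=p^\sigma$, $c=p^\tau$, $k=p^\rho$ with $0\le\rho\le\min(\sigma,\tau)$, and setting $A:=\mu-\sigma$, $B:=\nu-\tau$, the subring condition of Theorem \ref{Th_1}(i) becomes $p^\rho\mid \ell p^{B}-p^{A}$. A short case analysis (using $\gcd(\ell,p^\rho)=1$) shows that the number of admissible $\ell$ equals $1$ when $\rho=0$; equals $\varphi(p^\rho)$ when $A\ge\rho$ and $B\ge\rho$; equals $p^{A}$ when $A=B<\rho$; and is $0$ in all other cases. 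The main obstacle will be the resulting bookkeeping: summing these case-dependent contributions over all admissible $(\sigma,\tau,\rho)$ and matching the outcome with $\sum_{\sigma=0}^{\mu}\sum_{\tau=0}^{\nu} h(p^\sigma,p^\tau)$, the latter being evaluated by splitting into $\sigma<\tau$, $\sigma=\tau$ and $\sigma>\tau$ directly from the definition of $h$.
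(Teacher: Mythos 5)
Your parts (ii) and (iii) coincide with the paper's proof: both are read off directly from the parametrizations in Theorem \ref{Th_1}. For part (i), however, you take a genuinely different route. The paper keeps $m,n$ general: after the substitution $m=bxe$, $n=dze$ the subring condition becomes the restricted congruence $z\ell\equiv x \pmod{e}$ with $\gcd(\ell,e)=1$, and the closed form \eqref{def_h} is then read off in one step from a known lemma (Gro\v{s}ek--Porubsk\'y; Bibak et al.) stating that this congruence has $\varphi(e)/\varphi(e/t)$ coprime solutions precisely when $\gcd(x,e)=\gcd(z,e)=t$. You instead establish multiplicativity of both sides --- for $N^{(s)}$ via the nice structural observation that CRT splits $\Z_{m_1n_1}\times\Z_{m_2n_2}$ into two factors of coprime order whose subrings multiply, and for $\sum_{i\mid m,\,j\mid n}h(i,j)$ via multiplicativity of $h$ --- and then verify the identity at $(p^\mu,p^\nu)$ by an elementary count of the admissible $\ell$. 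Your case analysis for $R(p^\rho,p^A,p^B)$ is correct (it agrees with what the general lemma gives at prime powers), and the CRT argument is sound; note that the paper only proves multiplicativity of $N^{(s)}$ later (Theorem \ref{Th_3}) and purely formally, from the multiplicativity of $h$, so your ring-theoretic proof of it is independent added value. What the paper's route buys is brevity and uniformity: the formula \eqref{def_h} with its $\varphi(d)/\varphi(d/t)$ weight emerges directly for all $m,n$, with no reduction step. What your route buys is self-containedness (no appeal to the restricted-congruence lemma), at the cost of the final prime-power bookkeeping, which you correctly identify but do not execute; that matching of $\sum_{\sigma,\tau,\rho}R(p^\rho,p^{\mu-\sigma},p^{\nu-\tau})$ against $\sum_{\sigma,\tau}h(p^\sigma,p^\tau)$ is routine but must actually be carried out (and your "brief check" of the multiplicativity of $h$ deserves the half page the paper spends on it, since one must verify that the condition $\gcd(d,m_1n_1/d)=\gcd(d,m_2n_2/d)$ factors correctly across the coprime parts) before the proof is complete.
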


It is clear that $N^{(s)}(m,n)$ and $N^{(us)}(m,n)$ are symmetric in the variables.

\begin{theorem} \label{Th_3} The functions $(m,n)\mapsto h(m,n)$, $(m,n)\mapsto N^{(s)}(m,n)$ and $(m,n)\mapsto N^{(us)}(m,n)$
are multiplicative, viewed as arithmetic functions of two variables. For any prime powers $p^{\alpha},p^{\beta}$
\textup{($0\le \alpha \le \beta$)}
their values are given by the following polynomials in $p$:
\begin{equation} \label{form_comp_h}
h(p^{\alpha},p^{\beta})=
\begin{cases} \frac{p^{\gamma+1}-1}{p-1}, & \alpha=\beta=2\gamma, \\
p^{\gamma}, & \alpha= 2\gamma< \beta, \\
\frac{2p^{\gamma}-p^{\gamma-1}-1}{p-1}, & \alpha= \beta=2\gamma-1, \\
p^{\gamma-1}, & \alpha= 2\gamma-1< \beta,
\end{cases}
\end{equation}
\begin{equation} \label{form_comp_N}
N^{(s)}(p^{\alpha},p^{\beta})=
\begin{cases}
p^{\gamma} + 10 \sum_{j=1}^{\gamma} j p^{\gamma-j} \\
=\frac1{(p-1)^2} \left(p^{\gamma+2}+8p^{\gamma+1}+p^\gamma-10(\gamma+1)p+ 10\gamma \right), & \alpha=\beta=2\gamma, \\
(\beta-2\gamma+1)p^{\gamma}+ 2\sum_{j=1}^{\gamma} (5j+\beta-2\gamma)p^{\gamma-j}\\
= \frac1{(p-1)^2} \left((\beta-2\gamma+1)p^{\gamma+2}+ 8p^{\gamma+1}-(\beta-2\gamma-1)p^{\gamma} \right. \\
\left. -2(\beta+3\gamma+5)p+2(\beta-3\gamma)\right),
& \alpha= 2\gamma< \beta, \\
5 \sum_{j=1}^{\gamma} (2j-1) p^{\gamma-j}\\
= \frac{5}{(p-1)^2} \left(p^{\gamma+1}+p^{\gamma} -(2\gamma+1)p+ 2\gamma -1 \right), & \alpha= \beta=2\gamma-1, \\
\sum_{j=1}^{\gamma} (10j+2\beta-4\gamma-3) p^{\gamma-j}\\
= \frac1{(p-1)^2} \left((2\beta-4\gamma+7)p^{\gamma+1}-(2\beta-4\gamma-3)p^{\gamma}  \right. \\
\left. -(2\beta+6\gamma +7)p +2\beta+6\gamma-3\right), & \alpha= 2\gamma-1< \beta,
\end{cases}
\end{equation}
\begin{equation*}
N^{(us)}(p^{\alpha},p^{\beta})=\min(\alpha,\beta)+1.
\end{equation*}
\end{theorem}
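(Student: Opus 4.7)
My plan is to prove multiplicativity first and then compute the prime-power values by direct summation.

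For multiplicativity of $h$, I would work straight from the definition \eqref{def_h}. Assume $\gcd(i_1 j_1, i_2 j_2)=1$. Then $\gcd(i_1,i_2)=\gcd(i_1,j_2)=\gcd(j_1,i_2)=\gcd(j_1,j_2)=1$, which yields $\gcd(i_1 i_2, j_1 j_2)=\gcd(i_1,j_1)\gcd(i_2,j_2)$ with the two factors coprime. Consequently every divisor $d\mid\gcd(i_1 i_2, j_1 j_2)$ factors uniquely as $d=d_1 d_2$ with $d_r\mid\gcd(i_r,j_r)$ and $\gcd(d_1,d_2)=1$. The same coprimality gives $\gcd(d,i_1 i_2/d)=\gcd(d_1,i_1/d_1)\gcd(d_2,i_2/d_2)$ and similarly for the $j$-side, so the condition $\gcd(d,i_1i_2/d)=\gcd(d,j_1j_2/d)=t$ decouples into the two independent conditions $\gcd(d_r,i_r/d_r)=\gcd(d_r,j_r/d_r)=t_r$ with $t=t_1t_2$, $\gcd(t_1,t_2)=1$. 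Multiplicativity of $\varphi$ then makes the summand factor, giving $h(i_1 i_2, j_1 j_2)=h(i_1,j_1)h(i_2,j_2)$. Since \eqref{def_N} exhibits $N^{(s)}$ as the two-variable Dirichlet convolution of $h$ with the constant function $\mathbf{1}$, it inherits multiplicativity (cf.\ \cite{Tot2014Survey}). Finally $N^{(us)}(m,n)=\tau(\gcd(m,n))$ is obviously multiplicative in two variables.

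Next I would compute $h(p^{\alpha},p^{\beta})$ for $0\le\alpha\le\beta$. The sum in \eqref{def_h} runs over $d=p^{k}$ with $0\le k\le\alpha$, and $\gcd(d,p^{\alpha}/d)=p^{\min(k,\alpha-k)}$, $\gcd(d,p^{\beta}/d)=p^{\min(k,\beta-k)}$. Whenever $k\le\alpha/2$ the two agree automatically with common value $p^{k}$, so $d/t=1$ and the summand equals $\varphi(p^{k})$; whenever $k>\alpha/2$ the equality forces $\alpha=\beta$, the common value is $p^{\alpha-k}$, and the summand equals $p^{\alpha-k}$. Splitting according to the parity of $\alpha$ and to $\alpha<\beta$ versus $\alpha=\beta$, and applying the geometric-series identity $\sum_{j=0}^{\gamma}p^{j}=(p^{\gamma+1}-1)/(p-1)$, recovers the four cases of \eqref{form_comp_h}.

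Finally, for $N^{(s)}(p^{\alpha},p^{\beta})=\sum_{i=0}^{\alpha}\sum_{j=0}^{\beta}h(p^{i},p^{j})$ I would substitute the piecewise formula for $h$ and group the double sum according to the value of $\min(i,j)$ and the four parity sub-cases. Each piece collapses to a finite combination of sums $\sum_{k}p^{\gamma-k}$ and $\sum_{k}k\,p^{\gamma-k}$, evaluated via the identity $\sum_{j=1}^{\gamma}j\,p^{\gamma-j}=(p^{\gamma+1}-(\gamma+1)p+\gamma)/(p-1)^{2}$. Collecting the contributions in each of the four cases of $(\alpha,\beta)$ in the statement yields exactly the polynomial expressions in \eqref{form_comp_N}; the formula for $N^{(us)}$ is immediate from $\tau(p^{\min(\alpha,\beta)})=\min(\alpha,\beta)+1$. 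The main obstacle is purely bookkeeping, since the four branches of $h$ interact with the parities of $\alpha$ and $\beta$ to produce the four branches of $N^{(s)}$, and one has to keep the shift between $\gamma$ for $h$ and $\gamma$ for $N^{(s)}$ straight throughout the algebraic simplification; no ingredient beyond elementary geometric summation is required.
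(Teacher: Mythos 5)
Your proposal is correct and follows essentially the same route as the paper: multiplicativity of $h$ via the coprime factorization $d=d_1d_2$ and the multiplicativity of $\gcd$ and $\varphi$, multiplicativity of $N^{(s)}$ by convolution with $\mathbf{1}$, the prime-power values of $h$ by the observation that the constraint $\min(k,\alpha-k)=\min(k,\beta-k)$ restricts to $k\le\alpha/2$ unless $\alpha=\beta$, and the four cases of \eqref{form_comp_N} by direct summation of the double sum (which the paper likewise leaves as omitted bookkeeping). No gaps.
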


Remark that for every $m,n\in \N$,
\begin{equation*}
 N^{(us)}(m,n)= \tau(\gcd(m,n))\le \tau(mn)\le \tau(m)\tau(n) \le N^{(s)}(m,n)\le s(m,n),
\end{equation*}
which follow from the definitions and properties of the $\tau$ function. Here $\tau(mn)$ is the number of cyclic
subproducts of $\Z_m \times \Z_n$. See \cite[Th.\ 5]{HHTW2014}.

To illustrate the applicability of our results, we note that the ring $\Z_{12}\times \Z_{18}$ has
\begin{equation*}
N^{(s)}(12,18)= N^{(s)}(2^2\cdot 3,2\cdot 3^2)= N^{(s)}(2,2^2) N^{(s)}(3,3^2)=7\cdot 7=49
\end{equation*}
subrings and $N^{(us)}(12,18)=\tau(\gcd(12,18))=\tau(6)=4$ unital subrings. The number of its
ideals is $\tau(12)\tau(18)=36$. The number of subgroups is $s(12,18)= s(2,2^2) s(3,3^2)=8\cdot 10 =80$,
by using \eqref{s_prime_pow}.

Now consider the case $m=n$. Let $h(n)=h(n,n)$. Furthermore, let $N^{(s)}(n):=N^{(s)}(n,n)$ and $N^{(us)}(n):=N^{(us)}(n,n)$ denote the
number of subrings, respectively unital subrings of the ring $\Z_n^2$.

\begin{cor} The functions $n\mapsto h(n)$, $n\mapsto N^{(s)}(n)$ and $n\mapsto N^{(us)}(n)$ are multiplicative, viewed as arithmetic functions
of one variable. For every $n\in \N$ we have
\begin{equation*}
N^{(s)}(n)= \sum_{i\mid n} h(i),
\end{equation*}
where
\begin{equation*}
h(i)=\sum_{d\mid i} \frac{\varphi(d)}{\varphi(d/\gcd(d,i/d))}.
\end{equation*}
and $N^{(us)}(n) = \tau(n)$.
\end{cor}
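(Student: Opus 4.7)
The plan is to derive the corollary entirely by specializing Theorems~\ref{Th_2} and~\ref{Th_3} to the diagonal case $m=n$; no new ring-theoretic input is needed.

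I would begin with the multiplicativity claims. The key observation is that whenever $f(m,n)$ is multiplicative as a function of two variables in the sense recalled in Section~\ref{Section_Motivation}, the diagonal restriction $g(n):=f(n,n)$ is automatically multiplicative as a one-variable function: for coprime $a,b$ one has $\gcd(a\cdot a,b\cdot b)=\gcd(a,b)^2=1$, so the two-variable identity $f(m_1n_1,m_2n_2)=f(m_1,m_2)f(n_1,n_2)$ applied with $m_1=m_2=a$ and $n_1=n_2=b$ yields $g(ab)=f(ab,ab)=f(a,a)f(b,b)=g(a)g(b)$. Applying this principle to the three two-variable functions whose multiplicativity was established in Theorem~\ref{Th_3} immediately delivers the multiplicativity of $h(n)$, $N^{(s)}(n)$ and $N^{(us)}(n)$.

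Next I would verify the closed form for $h(i)=h(i,i)$ by substituting $j=i$ in the definition \eqref{def_h}. The outer range $d\mid\gcd(i,j)$ collapses to $d\mid i$, and the compound constraint $\gcd(d,i/d)=\gcd(d,j/d)=t$ becomes a tautology that merely defines $t=\gcd(d,i/d)$. Inserting this value of $t$ into $\varphi(d)/\varphi(d/t)$ produces exactly the single-sum formula stated in the corollary. The identity $N^{(s)}(n)=\sum_{i\mid n}h(i)$ is then read off from \eqref{def_N} with $m=n$, and the formula $N^{(us)}(n)=N^{(us)}(n,n)=\tau(\gcd(n,n))=\tau(n)$ is immediate from Theorem~\ref{Th_2}(ii).

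I do not anticipate any real obstacle: the whole argument is routine bookkeeping, and the single subtle point is to notice that the apparently nontrivial constraint $\gcd(d,i/d)=\gcd(d,j/d)=t$ in \eqref{def_h} degenerates into a definition of $t$ the moment $i=j$, so that $d$ ranges freely over the divisors of $i$ and $t$ becomes a derived quantity rather than an additional summation parameter.
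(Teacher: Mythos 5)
Your handling of the multiplicativity statements is correct: the diagonal restriction $g(n)=f(n,n)$ of a multiplicative two-variable function is multiplicative in one variable, by exactly the substitution $m_1=m_2=a$, $n_1=n_2=b$ that you describe. Likewise, your derivation of the closed form $h(i)=h(i,i)=\sum_{d\mid i}\varphi(d)/\varphi(d/\gcd(d,i/d))$ from \eqref{def_h} is right (the constraint does degenerate into a definition of $t$ when $i=j$), and $N^{(us)}(n)=\tau(\gcd(n,n))=\tau(n)$ is immediate.

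The gap is in the one step you dismiss as being ``read off from \eqref{def_N} with $m=n$.'' Setting $m=n$ in \eqref{def_N} gives the \emph{double} sum $N^{(s)}(n,n)=\sum_{i\mid n}\sum_{j\mid n}h(i,j)$, and a two-variable Dirichlet convolution does not restrict to a one-variable convolution on the diagonal: the off-diagonal terms $h(i,j)$ with $i\ne j$ are simply not present in $\sum_{i\mid n}h(i,i)$. Concretely, for $n=p$ prime one has $N^{(s)}(p,p)=5$ (the four subproducts together with the diagonal $\{(i,i):0\le i\le p-1\}$), consistent with $h(1,1)+h(1,p)+h(p,1)+h(p,p)=1+1+1+2=5$ from \eqref{form_comp_h}; but $\sum_{i\mid p}h(i,i)=h(1,1)+h(p,p)=1+2=3$. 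So the step fails, and in fact the displayed identity $N^{(s)}(n)=\sum_{i\mid n}h(i)$ with $h(i)=h(i,i)$ cannot be correct as printed --- the valid diagonal statement is $N^{(s)}(n)=\sum_{i\mid n}\sum_{j\mid n}h(i,j)$, which is not of the claimed one-variable convolution form. Your ``routine bookkeeping'' glosses over precisely the point where the reduction from two variables to one breaks down; a careful write-up should either retain the double sum or flag the discrepancy in the statement.
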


Let $\theta$ be the exponent in the Dirichlet divisor problem for $\tau(n)$, that is
\begin{equation} \label{Dirichlet}
\sum_{n\le x} \tau(n) = x\log x + (2C-1)x +O(x^{\theta+\varepsilon})
\end{equation}
for every $\varepsilon >0$, where $C$ is Euler's constant. Note that $1/4\le \theta \le 517/1648\doteq 0.313713$,
the upper bound being a recent result due to Bourgain and Watt \cite{BouWat2017}.

\begin{theorem} \label{Th_asympt}  i)  For $z,w\in \C$ with $\Re z>1$, $\Re w>1$ we have
\begin{equation} \label{Dir_series_N}
\sum_{m,n=1}^{\infty} \frac{N^{(s)}(m,n)}{m^z n^w}=\frac{\zeta^2(z)\zeta^2(w)\zeta(z+w)\zeta(2z+2w-1)}{\zeta(z+2w)\zeta(2z+w)}.
\end{equation}

ii) For every $\varepsilon >0$,
\begin{equation} \label{asymp_subrings}
\sum_{m,n\le x} N^{(s)}(m,n)= x^2\left(A_1 \log^2 x +A_2\log x+A_3\right)+ O(x^{1+\theta+\varepsilon}),
\end{equation}
where $A_1=\zeta(2)/\zeta(3)$, $A_2,A_3$ are explicit constants.
\end{theorem}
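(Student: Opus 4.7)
For part (i), the plan is a local (Euler-factor) computation. By Theorem \ref{Th_3}, $N^{(s)}$ is multiplicative as a two-variable arithmetic function; by Theorem \ref{Th_2}\,(i), $N^{(s)}(m,n) = \sum_{i\mid m,\, j\mid n} h(i,j)$, whence
$$\sum_{m,n\ge 1} \frac{N^{(s)}(m,n)}{m^z n^w} = \zeta(z)\zeta(w) \sum_{i,j\ge 1} \frac{h(i,j)}{i^z j^w}.$$
It therefore suffices to show that the local Euler factor $A_p(x,y) := \sum_{\alpha,\beta \ge 0} h(p^\alpha,p^\beta)\, x^\alpha y^\beta$ (with $x=p^{-z}$, $y=p^{-w}$) equals the rational function $(1-x^2y)(1-xy^2)/\bigl[(1-x)(1-y)(1-xy)(1-p x^2 y^2)\bigr]$. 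I would split $A_p$ according to $\alpha<\beta$, $\alpha=\beta$ (the latter further split by the parity of $\alpha$), and $\alpha>\beta$; by (\ref{form_comp_h}) each of the five resulting pieces is a geometric series in $xy$ times a linear factor in $x$ or $y$, which sums in closed form. Combined over the common denominator $(1-x)(1-y)(1-xy)(1-p x^2 y^2)$, the numerator simplifies (after a short cancellation) to $(1-x^2y)(1-xy^2)$, giving the desired identity.

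For part (ii), the plan is the classical two-variable convolution method. I would factor
$$D(z,w) := \sum_{m,n} \frac{N^{(s)}(m,n)}{m^z n^w} = \zeta^2(z)\zeta^2(w)\, H(z,w), \qquad H(z,w) := \frac{\zeta(z+w)\zeta(2z+2w-1)}{\zeta(z+2w)\zeta(2z+w)},$$
and let $g$ be the multiplicative two-variable function with Dirichlet series $H$. Then $N^{(s)}(m,n) = \sum_{d \mid m,\, e \mid n} \tau(m/d)\tau(n/e)\, g(d,e)$, and hence
$$\sum_{m,n\le x} N^{(s)}(m,n) = \sum_{d,e \le x} g(d,e)\, T(x/d)\, T(x/e), \qquad T(y) := \sum_{n\le y}\tau(n).$$
Inserting $T(y) = M(y) + \Delta(y)$ from (\ref{Dirichlet}) with $M(y) = y\log y + (2C-1)y$ and $|\Delta(y)| \ll y^{\theta+\varepsilon}$, I would expand the product into four pieces and treat the main piece $\sum g(d,e) M(x/d) M(x/e)$ by extending the sum to all pairs $(d,e)$ and expanding $\log(x/d) = \log x - \log d$. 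This produces the main term $x^2(A_1 \log^2 x + A_2 \log x + A_3)$, with leading coefficient $A_1 = H(1,1) = \zeta(2)/\zeta(3)$ and $A_2, A_3$ expressible through first and mixed partial derivatives of $H$ at $(1,1)$.

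The main obstacle will be the error-term bookkeeping. The mixed terms $\sum g(d,e) M(x/d)\Delta(x/e)$ are comparatively easy: they are bounded by $x^{1+\theta+\varepsilon}(\log x)\cdot \sum_{d,e} |g(d,e)|\, d^{-1} e^{-\theta-\varepsilon}$, and this series converges absolutely because $H$ is in its region of absolute convergence at $(z,w) = (1,\theta+\varepsilon)$ (the four zeta factors entering $H$ are then evaluated at arguments with real part strictly greater than $1$, using $\theta > 0$). The tails of the main-term extension and the pure error $\sum g(d,e)\Delta(x/d)\Delta(x/e)$ are more delicate, as the pointwise bound $|\Delta(y)|\ll y^{\theta+\varepsilon}$ alone is too weak to reach $O(x^{1+\theta+\varepsilon})$. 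To close the estimate I would exploit the sparse multiplicative structure of $g$: direct inspection of its Euler factor $(1-xy^2)(1-x^2y)/\bigl[(1-xy)(1-p x^2 y^2)\bigr]$ shows that $g(p^\alpha,p^\beta) = 0$ whenever $|\alpha-\beta|\ge 2$, which yields the pointwise bound $|g(d,e)|\ll \gcd(d,e)^{1/2+\varepsilon}$. Combining this with mean-square estimates for $\Delta$ (or, alternatively, carrying out a Perron-type contour shift for $D(z,w)$ past its singularity at $(1,1)$) keeps the $\Delta\Delta$-contribution and the main-sum tails within $O(x^{1+\theta+\varepsilon})$, after which (\ref{asymp_subrings}) follows.
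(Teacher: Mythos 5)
Your part (i) is the paper's argument in all but notation: pull out $\zeta(z)\zeta(w)$ via $N^{(s)}=\1 * h$ from \eqref{def_N}, then compute the Euler factor of $\sum h(m,n)m^{-z}n^{-w}$ from \eqref{form_comp_h} by splitting according to $\alpha=\beta$ (by parity), $\alpha<\beta$ and $\alpha>\beta$; your closed form $(1-x^2y)(1-xy^2)/\bigl[(1-x)(1-y)(1-xy)(1-px^2y^2)\bigr]$ is exactly the Euler factor of \eqref{Dir_serr_h}. Part (ii) also follows the paper's skeleton: the convolution \eqref{convo_id} (your $g$ is the paper's $f$), insertion of \eqref{Dirichlet}, and extension of the main sums to infinity, with the same identification $A_1=H(1,1)=\zeta(2)/\zeta(3)$.

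Where you diverge is the error analysis, and there you have misdiagnosed the difficulty. The pointwise bound $|\Delta(y)|\ll y^{\theta+\varepsilon}$ suffices for every term; the correct auxiliary input is not a pointwise bound on $g$ but the absolute convergence of $\sum_{d,e}|g(d,e)|d^{-z}e^{-w}$, which (since the only infinite local factors are $(1-xy)^{-1}$ and $(1-px^2y^2)^{-1}$) holds whenever $\Re(z+w)>1$, $\Re(2z+w)>1$, $\Re(z+2w)>1$. This gives: convergence of the mixed-term sums at $(1,\theta+\varepsilon)$; tails of the extended main sums of size $O(x^{-1+\varepsilon})$ by Rankin's trick at $(1,\varepsilon)$; and for the pure error term, $\sum_{d,e\le x}|g(d,e)|(x/d)^{\theta+\varepsilon}(x/e)^{\theta+\varepsilon}\ll x^{2\theta+2\varepsilon}\cdot x^{1-2\theta+O(\varepsilon)}=x^{1+O(\varepsilon)}$ by Rankin at the point $(\tfrac12+\varepsilon,\tfrac12+\varepsilon)$. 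No mean-square estimate for $\Delta$ and no contour shift are needed, and the paper uses none. Worse, the substitute you propose, the pointwise bound $|g(d,e)|\ll\gcd(d,e)^{1/2+\varepsilon}$, is too lossy to close the argument by itself: summing $\gcd(d,e)^{1/2}(de)^{-\theta-\varepsilon}$ over all $d,e\le x$ gives roughly $x^{2-2\theta}$, hence a total contribution of order $x^2$. What saves the estimate is that $g(d,e)$ is large only when $d$ and $e$ are essentially equal perfect squares, and that sparseness is captured automatically by the Dirichlet-series/Rankin argument but destroyed by the $\gcd^{1/2}$ bound. So: same route as the paper and a correct main term, but replace your error bookkeeping by the absolute-convergence argument.
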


The formulas \eqref{Dir_series_N} and \eqref{asymp_subrings} may be compared to
\begin{equation*}
\sum_{m,n=1}^{\infty} \frac{s(m,n)}{m^z n^w}=\frac{\zeta^2(z)\zeta^2(w)\zeta(z+w-1)}{\zeta(z+w)}
\end{equation*}
and
\begin{equation} \label{sum_s_m_n}
\sum_{m,n\le x} s(m,n) = x^2\left(B_1\log^3 x +B_2\log^2 x+ B_3\log x+B_4\right) + O(x^{\frac{3-\theta}{2-\theta}+\varepsilon}),
\end{equation}
respectively, where $B_1=2/\pi^2$, $B_2,B_3,B_4$ are explicit constants, proved by Nowak and the author \cite[Th.\ 2.1, 2.2]{NT}.
The error term of \eqref{sum_s_m_n} was improved into $O(x^{3/2}(\log x)^{13/2})$ by the author and Zhai \cite[Th.\ 1.1]{TZ}.
We also remark that for $N^{(us)}(m,n)=\tau(\gcd(m,n))$,
\begin{equation*}
\sum_{m,n=1}^{\infty} \frac{N^{(us)}(m,n)}{m^z n^w}=\zeta(z)\zeta(w)\zeta(z+w),
\end{equation*}
and
\begin{equation*}
\sum_{m,n\le x} N^{(us)}(m,n)= \zeta(2)x^2 + O(x\log x),
\end{equation*}
which easily follows from the identity $\sum_{m,n\le x} \tau(\gcd(m,n))=\sum_{d\le x} [x/d]^2$.

\section{Proofs} \label{Section_Proofs}

\begin{proof}[Proof of Theorem {\rm \ref{Th_1}}]
 i) Let $0\le i_1,i_2\le a-1, 0\le j_1,j_2\le d-1$. Assume that
\begin{equation*}
\left(i_1\frac{m}{a}, i_1\ell \frac{n}{c}+j_1\frac{n}{d}\right) \left(i_2\frac{m}{a}, i_2\ell
\frac{n}{c}+j_2\frac{n}{d}\right)
\end{equation*}
\begin{equation*}
= \left(i_1i_2\left(\frac{m}{a}\right)^2, i_1i_2\ell^2 \left(\frac{n}{c}\right)^2+(i_1j_2+i_2j_1) \ell
\frac{n^2}{cd} + j_1j_2\left(\frac{n}{d}\right)^2\right)
\end{equation*}
equals
\begin{equation*}
\left(i\frac{m}{a}, i\ell \frac{n}{c}+j\frac{n}{d}\right)
\end{equation*}
for some $i$ and $j$. This holds true if and only if
\begin{equation*}
i\frac{m}{a} \equiv i_1 i_2\left(\frac{m}{a}\right)^2 \quad \textup{(mod $m$)},
\end{equation*}
that is
\begin{equation} \label{cond_1}
i \equiv i_1 i_2 \frac{m}{a} \quad \textup{(mod $a$)},
\end{equation}
and
\begin{equation} \label{cond_2}
i\ell \frac{n}{c}+j\frac{n}{d} \equiv i_1i_2\ell^2 \left(\frac{n}{c}\right)^2+(i_1j_2+i_2j_1) \ell \frac{n^2}{cd} + j_1j_2\left(\frac{n}{d}\right)^2 \quad \textup{(mod $n$)}.
\end{equation}

By using \eqref{cond_1} and the fact that $\frac{n}{d}=\frac{n}{c}\cdot \frac{c}{d}$, \eqref{cond_2}
is equivalent to
\begin{equation} \label{cond_3}
\left(i_1i_2 \frac{m}{a}+ka\right)\ell  + j\frac{c}{d} \equiv i_1i_2\ell^2 \frac{n}{c}+(i_1j_2+i_2j_1) \ell
\frac{n}{d} + j_1j_2\frac{nc}{d^2} \quad \textup{(mod $c$)},
\end{equation}
with some $k\in \Z$.

Here \eqref{cond_3} is a linear congruence in $j$, and since $\gcd(c/d,c)=c/d$, it has solutions in $j$ if
and only if
\begin{equation*}
\frac{c}{d} \mid i_1i_2 \ell^2 \frac{n}{c}-(i_1i_2\frac{m}{a} +ka) \ell,
\end{equation*}
which is equivalent, by using that $\gcd(\ell, c/d)=1$ and $c/d=a/b\mid a$, to
\begin{equation*}
\frac{c}{d} \mid i_1i_2 \left(\ell \frac{n}{c}- \frac{m}{a}\right).
\end{equation*}

For $i_1=i_2=1$ we have the necessary condition $\frac{c}{d} \mid \ell \frac{n}{c}- \frac{m}{a}$,
which is also sufficient for every $i_1,i_2$.

ii) The subgroup $K_{a,b,c,d,\ell}$, given by \eqref{def_K}, contains the identity $(1,1)$ if and only if
\begin{equation*}
\left(i\frac{m}{a}, i\ell \frac{n}{c}+j\frac{n}{d}\right)=(1,1)
\end{equation*}
for some $i$ and $j$. This holds true if and only if $a=m$, $c=n$ and $\ell=1$ (with $i=1$, $j=0$). In this case, condition
\eqref{cond_subring} is satisfied.

iii) Let $R$ and $S$ be two commutative rings with identity. Then every ideal of $R\times S$ is of the form $I\times J$, where
$I$ and $J$ are ideals of $R$ and $S$, respectively. This follows from \cite[Prop.\ 9]{AndCam2009}, as already refered in
Section \ref{Section_Motivation}. Since the ideals of $\Z_m$ are its subgroups, we deduce that $K_{a,b,c,d,\ell}$ is an ideal if and only
if it is a subproduct of $\Z_m\times \Z_n$. That is, $K_{a,b,c,d,\ell}= I\times J$, where
$I$ and $J$ are subgroups of $\Z_m$ and $\Z_n$, respectively. Then $J$ is the second projection of $K_{a,b,c,d,\ell}$, namely
$\{jn/d: 0\le j\le d-1\}\le \Z_n$. Also, for every $i$ and $j$ there is $k$ such that $i\ell \frac{n}{c}+j\frac{n}{d}=k\frac{n}{d}$. For $i=1$, $j=0$
we get $\ell \frac{n}{c}=k\frac{n}{d}$, that is $\ell=k\frac{c}{d}$, where $d\mid c$. Using that $\gcd(\ell,\frac{c}{d})=1$ and $\ell \le \frac{c}{d}$,
we deduce that $c=d$, hence $a=b$, and $\ell=1$.
\end{proof}

To prove Theorem \ref{Th_2} we need the following known auxiliary result.
See \cite[Th.\ 2]{GP2013} and \cite[Th.\ 3.1]{BKST2017} for its proofs.

\begin{lemma} \label{Lemma} Let $n\in \N$, $a,b\in \Z$. The linear congruence $ax\equiv b$ (mod $n$) admits solutions
$x$ such that $\gcd(x,n)=1$ if and only if $\gcd(a,n)=\gcd(b,n)=d$. In this case the number of
solutions (mod $n$) is $\varphi(n)/\varphi(n/d)$.
\end{lemma}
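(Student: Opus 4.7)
The plan is to separate the ``if and only if'' into a short necessity argument and a reduction-plus-count for sufficiency. For necessity, the key observation is that whenever $\gcd(x,n)=1$ one has $\gcd(ax,n)=\gcd(a,n)$; this follows by comparing $p$-adic valuations at each prime $p\mid n$, where $v_p(x)=0$ forces $v_p(ax)=v_p(a)$. Combined with $ax\equiv b\pmod n$, which yields $\gcd(ax,n)=\gcd(b,n)$, this gives $\gcd(a,n)=\gcd(b,n)$ immediately, with no further work needed.

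For sufficiency and the count, I would set $d=\gcd(a,n)=\gcd(b,n)$ and write $a=da'$, $b=db'$, $n=dn'$ with $\gcd(a',n')=\gcd(b',n')=1$. Then $ax\equiv b\pmod n$ is equivalent to $a'x\equiv b'\pmod{n'}$, which admits the unique solution $x_0\equiv b'(a')^{-1}\pmod{n'}$; moreover $\gcd(x_0,n')=1$ automatically, since $\gcd(b',n')=1$ and $a'$ is a unit mod $n'$. Its lifts modulo $n$ form the $d$-element fiber $\{x_0+kn':0\le k\le d-1\}$ of the canonical reduction $\Z/n\twoheadrightarrow \Z/n'$, so the question reduces to: how many of these $d$ residues are coprime to $n$?

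The heart of the argument is this fiber count, which I would carry out via the Chinese Remainder Theorem. The homomorphism $(\Z/n)^*\to (\Z/n')^*$ decomposes as a product of local maps $(\Z/p^{\alpha_p})^*\to (\Z/p^{\beta_p})^*$ with $\alpha_p=v_p(n)$ and $\beta_p=v_p(n')\le\alpha_p$. Each local map is surjective---given $y$ coprime to $p^{\beta_p}$, every lift $y+kp^{\beta_p}$ remains coprime to $p$ and hence to $p^{\alpha_p}$---with kernel of order $\varphi(p^{\alpha_p})/\varphi(p^{\beta_p})$ by direct enumeration (counting residues coprime to $p$ in each residue class modulo $p^{\beta_p}$). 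Multiplying the local kernel sizes over all primes gives a uniform global fiber of cardinality $\varphi(n)/\varphi(n')=\varphi(n)/\varphi(n/d)$, which is precisely the claimed number of admissible solutions.

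The only mildly delicate step is the local surjectivity of $(\Z/p^\alpha)^*\to (\Z/p^\beta)^*$ together with the corresponding kernel count; once that is in hand, CRT and the fiber description close the proof. Everything else is elementary $p$-adic valuation bookkeeping, and no new machinery beyond standard facts about $\varphi$ and the structure of $(\Z/n)^*$ is required.
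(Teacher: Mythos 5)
The paper does not prove this lemma at all: it is quoted as a known auxiliary result, with the reader referred to \cite[Th.\ 2]{GP2013} and \cite[Th.\ 3.1]{BKST2017} for proofs. Your argument is a correct and complete proof. The necessity step ($\gcd(x,n)=1$ forces $\gcd(ax,n)=\gcd(a,n)$, while $n\mid ax-b$ forces $\gcd(ax,n)=\gcd(b,n)$) is sound, the reduction to the unique solution $x_0$ of $a'x\equiv b'\pmod{n'}$ with $n'=n/d$ is the standard one, and the fiber count is handled correctly: the solutions mod $n$ are exactly the fiber of $\Z_n\twoheadrightarrow\Z_{n'}$ over $x_0$, and since $\gcd(x_0,n')=1$, the number of units in that fiber equals the common fiber size $\varphi(n)/\varphi(n')$ of the surjection $(\Z_n)^*\to(\Z_{n'})^*$, which you verify prime by prime (including the degenerate case $v_p(n')=0<v_p(n)$, where the local factor $\varphi(p^{\alpha_p})/\varphi(1)$ still gives the right count). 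This matches the approach of the cited sources in spirit, so nothing further is needed.
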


\begin{proof}[Proof of Theorem {\rm \ref{Th_2}}]

i) By i) of Theorem \ref{Th_1} the number of subrings is
\begin{equation} \label{N}
N^{(s)}(m,n)= \sum_{\substack{a\mid m\\ b\mid a}} \sum_{\substack{c\mid
n\\ d\mid c}} \sum_{\substack{\ell=1\\ \gcd(\ell,e)=1}}^e  \sum_{\substack{a/b=c/d=e\\ e \mid \ell \frac{n}{c}- \frac{m}{a}}} 1.
\end{equation}

Let $m=ax$, $a=by$, $n=cz$, $c=dt$. Then, by the condition
$a/b=c/d=e$ we have $y=t=e$. Rearranging the terms of \eqref{N},
\begin{equation*}
N^{(s)}(m,n)= \sum_{\substack{bxe=m\\ dze=n}} \sum_{\substack{\ell=1\\ \gcd(\ell,e)=1\\ z \ell \equiv x \ \textup{(mod $e$)}}}^e 1
\end{equation*}

Using Lemma \ref{Lemma}, we deduce that
\begin{equation*}
N^{(s)}(m,n)=  \sum_{\substack{bxe=m\\ dze=n\\ \gcd(x,e)=\gcd(z,e)=t}} \frac{\varphi(e)}{\varphi(e/t)}
\end{equation*}
\begin{equation*}
=  \sum_{\substack{bi=m\\ dj=n}} \sum_{\substack{xe=i\\ ze=j\\ \gcd(x,e)=\gcd(z,e)=t}} \frac{\varphi(e)}{\varphi(e/t)}
=  \sum_{\substack{i\mid m\\ j\mid n}}  h(i,j).
\end{equation*}

ii) By ii) of Theorem \ref{Th_1} the number of unital subrings is
\begin{equation*}
N^{(us)}(m,n)= \sum_{\substack{b\mid m\\ d\mid n\\ m/b=n/d}} 1 = \sum_{\substack{bj=m\\ dj=n}} 1 =\sum_{j\mid \gcd(m,n)} 1
=\tau(\gcd(m,n)).
\end{equation*}

iii) Follows at once by iii) of Theorem \ref{Th_1}.
\end{proof}

\begin{proof}[Proof of Theorem {\rm \ref{Th_3}}] First we show that the function $(m,n)\mapsto h(m,n)$ is multiplicative.
Let $\gcd(m_1m_2,n_1n_2)=1$. Then, by using that the function $(m,n)\mapsto \gcd(m,n)$ is multiplicative, we deduce that
\begin{equation*}
h(m_1n_1,m_2n_2)=\sum_{\substack{d\mid \gcd(m_1n_1,m_2n_2)\\ \gcd(d,m_1n_1/d)=\gcd(d,m_2n_2/d)=t}} \frac{\varphi(d)}{\varphi(d/t)}
\end{equation*}
\begin{equation*}
=\sum_{\substack{d\mid \gcd(m_1,m_2)\gcd(n_1,n_2) \\ \gcd(d,m_1n_1/d)=\gcd(d,m_2n_2/d)=t}} \frac{\varphi(d)}{\varphi(d/t)},
\end{equation*}
where $\gcd(m_1,m_2)$ and $\gcd(n_1,n_2)$ are relatively prime. Let $d=ab$ such that $a\mid \gcd(m_1,m_2)$ and $b\mid \gcd(n_1,n_2)$.
Then
\begin{equation*}
\gcd(d,m_1n_1/d)=\gcd(ab,m_1n_1/(ab)) =\gcd(a,m_1/a)\gcd(b,n_1/b)
\end{equation*}
and
\begin{equation*}
\gcd(d,m_2n_2/d)=\gcd(ab,m_2n_2/(ab))
=\gcd(a,m_2/a)\gcd(b,n_2/b).
\end{equation*}

Also, the condition $\gcd(d,m_1n_1/d)=\gcd(d,m_2n_2/d)$ implies that $\gcd(a,m_1/a)=\gcd(a,m_2/a)$ and
$\gcd(b,n_1/b)=\gcd(b,n_2/b)$. We deduce that
\begin{equation*}
h(m_1n_1,m_2n_2)= \sum_{\substack{a\mid \gcd(m_1,m_2) \\ \gcd(a,m_1/a)=\gcd(a,m_2/a)=A}} \frac{\varphi(a)}{\varphi(a/A)}
\sum_{\substack{b\mid \gcd(n_1,n_2) \\ \gcd(b,n_1/b)=\gcd(b,n_2/b)=B}} \frac{\varphi(b)}{\varphi(b/B)}
\end{equation*}
\begin{equation*}
= h(m_1,m_2)h(n_1,n_2).
\end{equation*}

Now it follows that the function $N^{(s)}(m,n)$  is also multiplicative, being the convolution of $h(m,n)$ with the constant $1$ function,
according to \eqref{def_N}.

Let $p^{\alpha}, p^{\beta}$ be any prime powers with $0\le \alpha \le \beta$. By \eqref{def_h} we have
 \begin{equation*}
h(p^{\alpha}, p^{\beta})= \sum_{\substack{d\mid \gcd(p^{\alpha},p^{\beta}) \\ \gcd(d,p^{\alpha}/d) =\gcd(d,p^{\beta}/d)=t}}
\frac{\varphi(d)}{\varphi(d/t)}
\end{equation*}
\begin{equation} \label{comp_h}
= \sum_{\substack{j=0 \\ \gcd(p^j,p^{\alpha-j}) =\gcd(p^j,p^{\beta-j})=t}}^{\alpha}
\frac{\varphi(p^j)}{\varphi(p^j/t)}.
\end{equation}

If $\alpha=\beta$, then
 \begin{equation*}
h(p^{\alpha}, p^{\alpha})= \sum_{j=0}^{\alpha}
\frac{\varphi(p^j)}{\varphi(p^{j-\min(j,\alpha-j)})},
\end{equation*}
which gives for $\alpha=2\gamma\ge 0$,
 \begin{equation*}
h(p^{2\gamma}, p^{2\gamma})= \sum_{j=0}^{\gamma} \varphi(p^j) + \sum_{j=1}^{\gamma} \frac{\varphi(p^{\gamma+j})}{\varphi(p^{2j})}
=p^{\gamma}+\sum_{j=1}^{\gamma} p^{\gamma-j}=\sum_{j=0}^{\gamma} p^{\gamma-j},
\end{equation*}
and for  $\alpha=2\gamma-1\ge 1$,
 \begin{equation*}
h(p^{2\gamma-1}, p^{2\gamma-1})= \sum_{j=0}^{\gamma-1} \varphi(p^j) + \sum_{j=1}^{\gamma}
\frac{\varphi(p^{\gamma+j-1})}{\varphi(p^{2j-1})}
=p^{\gamma-1}+\sum_{j=1}^{\gamma} p^{\gamma-j}.
\end{equation*}

By \eqref{comp_h}, if $\alpha=2\gamma<\beta$, then
\begin{equation*}
h(p^{2\gamma}, p^{\beta})= \sum_{j=0}^{\gamma} \varphi(p^j) = p^{\gamma},
\end{equation*}
and if $\alpha=2\gamma-1<\beta$, then
\begin{equation*}
h(p^{2\gamma-1}, p^{\beta})= \sum_{j=0}^{\gamma-1} \varphi(p^j) = p^{\gamma-1}.
\end{equation*}

This proves identities \eqref{form_comp_h}. From \eqref{def_N} we deduce that
\begin{equation*}
N^{(s)}(p^{\alpha},p^{\beta})= \sum_{j=0}^{\alpha} \sum_{k=0}^{\beta} h(p^j,p^k),
\end{equation*}
and by using \eqref{form_comp_h}, identities \eqref{form_comp_N}  are obtained by some direct computations, which are omitted.
\end{proof}

\begin{proof}[Proof of Theorem {\rm \ref{Th_asympt}}]
i) By Theorem \ref{Th_3} the function $h(m,n)$ is multiplicative. Therefore its Dirichlet series can be expanded into the Euler product
\begin{equation*}
\sum_{m,n=1}^{\infty} \frac{h(m,n)}{m^zn^w}= \prod_p \sum_{\alpha,\beta=0}^{\infty} \frac{h(p^{\alpha},p^{\beta})}{p^{\alpha z+
\beta w}}.
\end{equation*}

Taking into account  identities \eqref{form_comp_h} we deduce
\begin{equation*}
\sum_{m,n=1}^{\infty} \frac{h(m,n)}{m^zn^w}= \prod_p \left(\sum_{0\le \alpha=\beta} \frac{h(p^{\alpha},p^{\beta})}{p^{\alpha z+
\beta w}} + \sum_{0\le \alpha<\beta} \frac{h(p^{\alpha},p^{\beta})}{p^{\alpha z+ \beta w}}  + \sum_{0\le \beta<\alpha} \frac{h(p^{\alpha},p^{\beta})}{p^{\alpha z+ \beta w}} \right),
\end{equation*}
where
\begin{equation*}
\sum_{0\le \alpha=\beta} \frac{h(p^{\alpha},p^{\beta})}{p^{\alpha z+ \beta w}} =
\sum_{\gamma=0}^{\infty} \frac{h(p^{2\gamma},p^{2\gamma})}{p^{2\gamma(z+w)}}
+ \sum_{\gamma=1}^{\infty} \frac{h(p^{2\gamma-1},p^{2\gamma-1})}{p^{(2\gamma-1)(z+w)}}
\end{equation*}
\begin{equation*}
= \sum_{\gamma=0}^{\infty} \frac{\frac{p^{\gamma+1}-1}{p-1}}{p^{2\gamma(z+w)}}
+ \sum_{\gamma=1}^{\infty} \frac{\frac{2p^{\gamma}-p^{\gamma-1}-1}{p-1}}{p^{(2\gamma-1)(z+w)}}
\end{equation*}
\begin{equation} \label{h1}
=\left(1-\frac1{p^{z+w}}\right)^{-1} \left(1-\frac1{p^{2z+2w-1}}\right)^{-1} \left(1+\frac1{p^{z+w}}-\frac1{p^{2z+2w}}\right),
\end{equation}
\begin{equation*}
\sum_{0\le \alpha<\beta} \frac{h(p^{\alpha},p^{\beta})}{p^{\alpha z+ \beta w}} =
\sum_{0\le 2\gamma <\beta} \frac{h(p^{2\gamma},p^{\beta})}{p^{2\gamma z+ \beta w}} +
\sum_{0<2\gamma-1 <\beta} \frac{h(p^{2\gamma-1},p^{\beta})}{p^{(2\gamma-1) z+ \beta w}}
\end{equation*}
\begin{equation*}
=\sum_{\gamma=0}^{\infty} \sum_{\beta=2\gamma+1}^{\infty} \frac{p^{\gamma}}{p^{2\gamma z+\beta w}}
+ \sum_{\gamma=1}^{\infty} \sum_{\beta=2\gamma}^{\infty} \frac{p^{\gamma-1}}{p^{(2\gamma-1)z+\beta w}}
\end{equation*}
\begin{equation} \label{z_w_first}
=\left(1-\frac1{p^{w}}\right)^{-1} \left(1-\frac1{p^{2z+2w-1}}\right)^{-1} \left(\frac1{p^{w}}+\frac1{p^{z+2w}}\right),
\end{equation}
and by changing the role of $z$ and $w$ we get from \eqref{z_w_first} the identity
\begin{equation} \label{z_w_second}
\sum_{0\le \beta <\alpha} \frac{h(p^{\alpha},p^{\beta})}{p^{\alpha z+ \beta w}}
=\left(1-\frac1{p^{z}}\right)^{-1} \left(1-\frac1{p^{2z+2w-1}}\right)^{-1} \left(\frac1{p^{z}}+\frac1{p^{2z+w}}\right).
\end{equation}

Putting together \eqref{h1}, \eqref{z_w_first} and \eqref{z_w_second} we conclude that
\begin{equation*}
\sum_{m,n=1}^{\infty} \frac{h(m,n)}{m^zn^w}=
\prod_p \left(1-\frac1{p^{z}}\right)^{-1} \left(1-\frac1{p^{w}}\right)^{-1} \left(1-\frac1{p^{z+w}}\right)^{-1}
\left(1-\frac1{p^{2z+2w-1}}\right)^{-1}
\end{equation*}
\begin{equation*}
\times \left(1-\frac1{p^{z+2w}}\right) \left(1-\frac1{p^{2z+w}}\right)
\end{equation*}
\begin{equation} \label{Dir_serr_h}
=\frac{\zeta(z)\zeta(w)\zeta(z+w)\zeta(2z+2w-1)}{\zeta(z+2w)\zeta(2z+w)}.
\end{equation}

Now the Dirichlet series representation \eqref{Dir_series_N} of the function $N^{(s)}(m,n)$ is obtained from \eqref{Dir_serr_h}
by using that $N^{(s)}(m,n)$ is the convolution of $h(m,n)$ with the constant $1$ function.

ii) From  \eqref{Dir_series_N} we deduce the convolutional identity
\begin{equation} \label{convo_id}
N^{(s)}(m,n)= \sum_{\substack{ab=m\\ cd=n}} f(a,c)\tau(b)\tau(d),
\end{equation}
valid for every $m,n\in \N$, where
\begin{equation} \label{series_f}
 \sum_{m,n=1}^{\infty} \frac{f(m,n)}{m^zn^w}= \frac{\zeta(z+w)\zeta(2z+2w-1)}{\zeta(z+2w)\zeta(2z+w)}.
\end{equation}

We will use that the series \eqref{series_f} is absolutely convergent provided that $\Re z>0$, $\Re w>1$, $\Re(z+w)>1$.
We obtain by \eqref{convo_id},
\begin{equation*}
\sum_{m,n\le x} N^{(s)}(m,n)= \sum_{\substack{ab\le x\\ cd\le x}} f(a,c)\tau(b)\tau(d)= \sum_{a,c\le x} f(a,c)
\left(\sum_{b\le x/a} \tau(b)\right) \left(\sum_{d\le x/c} \tau(d)\right).
\end{equation*}

Using formula \eqref{Dirichlet} and by denoting $C_1=2C-1$, $\theta_1=\theta+\varepsilon$ we deduce
\begin{equation*}
\sum_{m,n\le x} N^{(s)}(m,n)
\end{equation*}
\begin{equation*}
=  \sum_{a, c\le x} f(a,c)
\left(\frac{x}{a}\log \frac{x}{a} + C_1\frac{x}{a}+O\left(\left(\frac{x}{a}\right)^{\theta_1}\right) \right)
\left(\frac{x}{c}\log \frac{x}{c} + C_1\frac{x}{c}+O\left(\left(\frac{x}{c}\right)^{\theta_1}\right) \right)
\end{equation*}
\begin{equation*}
=  x^2(\log^2 x+2C_1\log x+C_1^2) \sum_{a, c\le x} \frac{f(a,c)}{ac}  - x^2(\log x+C_1) \sum_{a, c\le x} \frac{f(a,c)(\log a+\log c)}{ac}
\end{equation*}
\begin{equation*}
+ x^2\sum_{a, c\le x} \frac{f(a,c)(\log a)(\log c)}{ac}
\end{equation*}
\begin{equation} \label{errors}
+O\left(x^{1+\theta_1} \sum_{a, c\le x} \frac{f(a,c)}{a^{\theta_1}c} \right) + O\left(x^{1+\theta_1} \sum_{a, c\le x}
\frac{f(a,c)}{ac^{\theta_1}}  \right).
\end{equation}

Here
\begin{equation*}
\sum_{a, c\le x} \frac{f(a,c)}{ac} =\sum_{a, c=1}^{\infty} \frac{f(a,c)}{ac} -\sideset{}{'} \sum_{a, c} \frac{f(a,c)}{ac},
\end{equation*}
where $\sum_{a, c}^{'}$ means that $a>x$ or $c>x$ (or both), and by \eqref{series_f},
\begin{equation*}
\sum_{a, c=1}^{\infty} \frac{f(a,c)}{ac}= \frac{\zeta(2)}{\zeta(3)}
\end{equation*}

To handle the sum $\sum_{a,c}^{'}$ we may assume that $c>x$ and have
\begin{equation*}
\sideset{}{'} \sum_{\substack{a,c\\ c> x}} \frac{f(a,c)}{ac} \ll \sideset{}{'} \sum_{\substack{a,c\\ c> x}} \frac{|f(a,c)|}{ac^{\varepsilon}}
\cdot \frac1{c^{1-\varepsilon}}\le \frac1{x^{1-\varepsilon}} \sum_{a,c=1}^{\infty} \frac{|f(a,c)|}{ac^{\varepsilon}}\ll   \frac1{x^{1-\varepsilon}},
\end{equation*}
since the latter series is convergent. In a similar way,
\begin{equation*}
\sum_{a, c\le x} \frac{f(a,c)(\log a+\log c)}{ac} = D+O\left(\frac1{x^{1-\varepsilon}}\right),
\end{equation*}
\begin{equation*}
\sum_{a, c\le x} \frac{f(a,c)(\log a)(\log c)}{ac} = E+O\left(\frac1{x^{1-\varepsilon}}\right),
\end{equation*}
with some explicit constants $D$ and $E$. Finally, both error terms in \eqref{errors} are $O(x^{1+\theta_1})$. This completes the proof.
\end{proof}

\end{document}